\title
[Abstract formulatio ...]
{Abstract formulation of the Miura transform}
\author{Yoritaka Iwata}
\address[Y. Iwata]{ Faculty of Chemistry, Materials and Bioengineering, Kansai University}
\email{iwata$\_$phys@08.alumni.u-tokyo.ac.jp}
\thanks{This article has been presented in ICRAAM 2020 conference, Kuala Lumper, Feb. 2020.
The author is grateful to Prof. Emeritus Hiroki Tanabe for valuable comments.
The referee's comment on the Ref.~\cite{18iwata-1} is acknowledged for giving an idea for initiating this research.
This work was partially supported by JSPS KAKENHI Grant No. 17K05440.}
\keywords{Miura transform, soliton equations, logarithm}
\theoremstyle{plain}
\newtheorem{theorem}{Theorem}[section]
\newtheorem{corollary}[theorem]{Corollary}
\newtheorem{lemma}[theorem]{Lemma}
\begin{document}
\begin{abstract}
Miura transform is known as the transformation between  Korteweg de-Vries equation and modified Korteweg de-Vries equation. Its formal similarity to the Cole-Hopf transform has been noticed. This fact sheds light on the logarithmic type transformations as an origin of certain kind of nonlinearity in the soliton equations. In this article, based on the logarithmic representation of operators in infinite-dimensional Banach spaces, a structure common to both Miura and Cole-Hopf transforms is discussed. In conclusion, the Miura transform is generalized as the transform in abstract Banach spaces, and it is applied to the higher order abstract evolution equations.
\end{abstract}


\maketitle



\setcounter{section}{0} 

\section{Introduction}
The Korteweg-de-Vries equation (KdV equation, for short)  and the modified Korteweg de-Vries equation (mKdV equation, for short) are known as a nonlinear equations holding the soliton solutions.
Let $u$ and $v$ be the solutions of KdV equation and mKdV equation, respectively.
Both equations, which are known as soliton equations, are nonlinear, and the functions $u$ and $v$ are assumed to be the general solutions that satisfy
\[\begin{array}{ll} \label{kdvs}
{\rm [KdV]} \quad \partial_t u - 6 u \partial_x u + \partial_x^3 u = 0, \vspace{2.5mm} \\
{\rm [mKdV]} \quad\partial_t v - 6 v^2 \partial_x v + \partial_x^3 v = 0
\end{array} \]
without identifying the details such as the initial and boundary conditions of the mixed problem.
For a recent result associated with the well-posedness of the KdV equations, the existence and uniqueness of the solution of semilinear KdV equations in non-parabolic domain is obtained in \cite{20benia} by using the parabolic regularization method, the Faedo-Galerkin method, and the approximation of a non-parabolic domain by a sequence of regularizable subdomains.
Meanwhile interesting studies on the family of KdV-type equations have been recently carried out in \cite{13ruggieri}.
Let a set of all the real numbers be denoted by ${\mathbb R}$.
Although $u$ and $v$ are functions of $t \in {\mathbb R}$ and $x \in {\mathbb R}$, they are not apparently shown if there is no confusion.
The Miura transform~\cite{68miura} ${\mathcal M}:u \to v$ reads
\begin{equation} \begin{array}{ll} \label{miura}
u = \partial_x v + v^2, 
\end{array} \end{equation}
which is formally the same as the Riccati's differential equation of a variable $x$ if $u$ is assumed to be a known function.
In this article, the Miura transform is generalized as the transform in the abstract spaces.
The essence of several nonlinear transforms are pined downed within the theory of abstract equations defined in a general Banach spaces.
In conclusion, the structure of the general solutions of second order abstract evolution equations are presented in association with the Miura transform.

\section{Operator logarithm as nonlinear transform}
\subsection{Nonlinear transform associated with the Riccati's equation}
Following the method for solving the Riccati's equation, the logarithmic type transform appears as
\begin{equation} \begin{array}{ll} \label{colehopf}
v = \psi^{-1} \partial_x \psi, 
\end{array} \end{equation}
which corresponds to $v = \partial_x \log \psi $ if $ \log \psi$ and its derivative are well-defined.
This is formally the same as the Cole-Hopf transform \cite{51cole, 50hopf, 18iwata-1}.
By applying this transform to the Miura transform, the Miura transform is written by
\begin{equation} \begin{array}{ll} \label{miura}
u = \partial_x v + v^2 \vspace{2.5mm} \\
 ~ = \partial_x ( \psi^{-1} \partial_x \psi) + ( \psi^{-1} \partial_x \psi)^2 \vspace{2.5mm} \\
 ~ = -  \psi^{-2} (\partial_x  \psi )^2 + \psi^{-1}  ( \partial_x^2 \psi) 
  +  \psi^{-1}  (\partial_x \psi)   \psi^{-1} ( \partial_x \psi). 
\end{array} \end{equation}
If $\psi$ and $ \partial_x \psi$ commute,
\begin{equation} \label{miura-mod} \begin{array}{ll} 
\qquad u  = -  (\partial_x  \psi )^2  \psi^{-2} +  ( \partial_x^2 \psi) \psi^{-1}   +    (\partial_x \psi)^2  \psi^{-2}
 =   ( \partial_x^2 \psi) \psi^{-1}    \vspace{2.5mm} \\
\Leftrightarrow \quad \partial_x^2 \psi = u \psi .   
\end{array}  \end{equation}
This is the second order evolution equation in which $u$ plays a role of  infinitesimal generator, and the evolution direction is fixed to $x$.
It is remarkable that, after the combination with the Cole-Hopf transform, the Miura transform ${\mathcal M}:u \to \psi$ is a transform between nonlinear KdV equation and linear equation. 
In other words, it provides the transform between the evolution operator and its infinitesimal generator.
In the following the obtained transform from $u$ to $\psi$ is called the combined Miura transform.

\subsection{Miura transform and Cole-Hopf transform}
It is worth differentiating Eq.~\eqref{colehopf} for clarifying the identity of the Miura transform.
Under the commutation assumption, the formal calculation without taking the differentiability into account leads to 
\begin{equation} \label{2ndlog} \begin{array}{ll} 
 \partial_x^2  (\log \psi )  := \partial_x ( \psi^{-1} \partial_x \psi )  =   ( \partial_x^2 \psi) \psi^{-1}     -  ( \psi^{-1} \partial_x  \psi )^2,  
\end{array}  \end{equation}
where the first term of the right hand side corresponds to the combined Miura transform, and the second term of the right hand side is the square of the Cole-Hopf transform.
It simply means that $\partial_x^2  (\log \psi ) $ being defined by the right hand side of \eqref{2ndlog} can be regarded by defining the the combined Miura transform and Cole-Hopf transform simultaneously.
As is well known in the theory of integrable systems, $\partial_x^2  (\log \psi )$ corresponds to one typical type of Hirota's methods \cite{71hirota}, thus a typical type of linear to nonlinear transformation.
This type is known to be associated with the B\"{a}cklund transform and KP theory (for a textbook, see \cite{91jackson}).

\subsection{Logarithmic representation of infinitesimal generators}
Let $X$ be a Banach space, $B(X)$ is a space of bounded linear operators on $X$, and $Y$ be a dense Banach subspace of $X$.
The Cauchy problem for the first order abstract evolution equation of hyperbolic type \cite{70kato, 73kato} is defined by
\begin{equation} \begin{array}{ll} \label{eq01}
d u(t)/dt  - A(t) u(t) = f(t),  \qquad t \in [0,T] \vspace{2.5mm} \\
u(0) = u_0
\end{array} \end{equation}
in $X$, where $A(t): Y \to X$ is assumed to be the infinitesimal generator of the evolution operator $U(t,s) \in B(X)$ satisfying the strong continuity and the semigroup property:
\[ U(t,s) = U(t,r)U(r,s) \]
for $0 \le s \le r \le t < T$.
$U(t,s)$ is a two-parameter $C_0$-semigroup of operator (for definition, see \cite{65yosida, 66kato, 79tanabe}) that is a generalization of one-parameter $C_0$-semigroup and therefore an abstract generalization of the exponential function of operator.
If $A(t)$ is confirmed to be an infinitesimal generator, then the solution $u(t)$ is represented by $u(t) = U(t,s) u_s$ with $u_s \in X$ for a certain $0 \le s \le T$ (cf. Hille-Yosida Theorem; for example, see \cite{65yosida, 66kato, 79tanabe}). 
Then, for a certain complex number $\kappa$, the alternative bounded infinitesimal generator  $a(t,s) = {\rm Log} (U(t,s) + \kappa I)$ to $A(t)$ is well defined \cite{17iwata-1,17iwata-3}, where ${\rm Log}$ denotes the principal branch of logarithm.

\begin{lemma} [Logarithmic representation of infinitesimal generators \cite{17iwata-1}]
\label{lem1}
Let $t$ and $s$ satisfy $0 \le t,s \le T$, and $Y$ be a dense subspace of $X$.
Let $a(t,s) \in B(X)$ be defined by $a(t,s) = {\rm Log} U(t,s)$.
If $A(t)$ and $U(t,s)$ commute, infinitesimal generators $\{ A(t) \}_{0 \le t \le T}$ are represented by means of the logarithm function; there exists a certain complex number $\kappa \ne 0$ such that
\begin{equation} \label{logex} \begin{array}{ll}
 A(t) ~ u =  (I - \kappa e^{-a(t,s)})^{-1}~  \partial_{t}  a(t,s)  ~ u, 
\end{array} \end{equation}
where $u$ is an element of a dense subspace $Y$ of $X$, and the logarithm of function is defined by the Riesz-Dunford integral.
\end{lemma}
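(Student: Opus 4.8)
The plan is to differentiate the defining relation $a(t,s)=\mathrm{Log}\,(U(t,s)+\kappa I)$ with respect to $t$ through its Riesz--Dunford integral and then to invert the resulting identity so as to isolate $A(t)$. First I would write
\[
a(t,s)=\frac{1}{2\pi i}\oint_\Gamma \log\lambda\,\bigl(\lambda I-(U(t,s)+\kappa I)\bigr)^{-1}\,d\lambda,
\]
where $\Gamma$ encircles the spectrum of $U(t,s)+\kappa I$ and $\kappa\neq 0$ is chosen so that this spectrum avoids the branch cut of the principal logarithm, which is exactly what guarantees $a(t,s)\in B(X)$. Differentiating under the integral sign and applying the resolvent derivative $\partial_t(\lambda I-B)^{-1}=(\lambda I-B)^{-1}(\partial_t B)(\lambda I-B)^{-1}$ with $B=U(t,s)+\kappa I$ and $\partial_t B=\partial_t U(t,s)$ gives an integral representation of $\partial_t a(t,s)$.

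The decisive simplification comes from the commutation hypothesis. Because $A(t)$ commutes with $U(t,s)$, it commutes with every resolvent $(\lambda I-B)^{-1}$, so the two resolvent factors merge and the operator logarithm obeys the scalar derivative rule
\[
\partial_t a(t,s)=(U(t,s)+\kappa I)^{-1}\,\partial_t U(t,s).
\]
I would then insert the defining property of the infinitesimal generator for the hyperbolic evolution operator, $\partial_t U(t,s)=A(t)\,U(t,s)$, valid on the dense subspace $Y$, to reach
\[
\partial_t a(t,s)\,u=(U(t,s)+\kappa I)^{-1}A(t)\,U(t,s)\,u,\qquad u\in Y.
\]

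It then remains to solve for $A(t)$ purely algebraically. The key observation is $e^{a(t,s)}=U(t,s)+\kappa I$, so $e^{-a(t,s)}=(U(t,s)+\kappa I)^{-1}$ and
\[
I-\kappa e^{-a(t,s)}=(U(t,s)+\kappa I)^{-1}U(t,s).
\]
Using commutativity once more I would verify directly that $(I-\kappa e^{-a(t,s)})A(t)\,u=(U(t,s)+\kappa I)^{-1}A(t)U(t,s)\,u=\partial_t a(t,s)\,u$ on $Y$; since $I-\kappa e^{-a(t,s)}$ is boundedly invertible (its inverse is $U(t,s)^{-1}(U(t,s)+\kappa I)$, which exists provided $0\notin\sigma(U(t,s))$), left multiplication by $(I-\kappa e^{-a(t,s)})^{-1}$ yields precisely \eqref{logex}.

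I expect the main obstacle to be the rigorous justification of differentiating the Riesz--Dunford integral in $t$ and of carrying the \emph{unbounded} operator $A(t)\colon Y\to X$ through the contour integral: one must check that the resolvent factors map $Y$ into the domain of $A(t)$ and that all the integrals converge in operator norm uniformly enough to interchange the $t$-derivative, the contour integral, and the action of $A(t)$. The commutation assumption is what makes the scalar-style derivative rule for the operator logarithm legitimate; without it the extra quadratic term of the type appearing in \eqref{2ndlog} would survive and the clean representation \eqref{logex} would break down.
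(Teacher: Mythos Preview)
Your proposal is correct and follows essentially the same route as the paper: both differentiate $a(t,s)=\mathrm{Log}(U(t,s)+\kappa I)$ to obtain $\partial_t a(t,s)=(U(t,s)+\kappa I)^{-1}\partial_t U(t,s)$, then use $e^{a(t,s)}=U(t,s)+\kappa I$ together with the commutation hypothesis to rewrite $A(t)=U(t,s)^{-1}\partial_t U(t,s)$ as $(I-\kappa e^{-a(t,s)})^{-1}\partial_t a(t,s)$. The paper presents this as a purely formal computation and defers the analytic justification to the cited references, whereas you have supplied precisely those details (differentiation of the Riesz--Dunford integral, the role of commutativity in collapsing the resolvent factors, and the invertibility condition $0\notin\sigma(U(t,s))$); your final algebraic step---verifying $(I-\kappa e^{-a})A(t)=\partial_t a$ and then inverting---is just the paper's chain of equalities read in the reverse direction.
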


\begin{proof}
Only formal discussion is given here (for the detail, see \cite{17iwata-1, 19iwata}).   
Since $a(t)$ is defined by $a(t,s) = {\rm Log} (U(t,s) + \kappa I) $, $\partial_{t}  a(t,s) =  (U(t,s) + \kappa I )^{-1} \partial_t U(t,s)  $.
\[
(U(t,s) + \kappa I )  \partial_{t}  a(t,s)   = (U(t,s) + \kappa I ) (U(t,s) + \kappa I )^{-1} \partial_t U(t,s) .
\]
Under the commutation relation between $U(t,s)$ and $A(t)$,
\[ \begin{array}{ll}
A(t) ~ u := \partial_t {\rm Log} U(t,s) u \vspace{1.5mm}  \\
 = U(t,s)^{-1} \partial_t U(t,s) u \vspace{1.5mm}  \\
 = U(t,s)^{-1} (U(t,s) + \kappa I )  \partial_t a(t,s) ~ u  \vspace{1.5mm}  \\
  = (I + \kappa ( e^{a(t,s)}- \kappa I  )^{-1}  )  \partial_t a(t,s) ~ u  \vspace{1.5mm}  \\
 =  (  e^{a(t,s)}- \kappa I     + \kappa I  )   ( e^{a(t,s)}- \kappa I  )^{-1} \partial_{t}  a(t,s)  ~ u   \vspace{1.5mm}  \\
 =  ( I - \kappa e^{-a(t,s)} )^{-1} \partial_{t}  a(t,s)  ~ u ,
\end{array} \] 
where $u$ is an element in $Y$.

\end{proof}

The commutation assumption is trivially satisfied if $A(t)$ is independent of $t$.
Equation (\ref{logex}) is the logarithmic representation of infinitesimal generator $A(t)$.
This representation is the generalization of the Cole-Hopf transform \cite{51cole,50hopf}.

\section{Main result}
\subsection{Generalization of Miura transform}
Let ${\mathcal X}$ be a Banach space, and ${\mathcal Y}$ be a dense Banach subspace of ${\mathcal X}$.
By focusing on establishing the definition of infinitesimal generator, the discussion is limited to the autonomous case. 
The Cauchy problem for the second order abstract evolution equation of hyperbolic type (for example, see \cite{10brezis}) is defined by
\begin{equation} \begin{array}{ll} \label{eq02}
d^2 u(t)/dt^2  - {\mathcal A} (t) u(t) = 0,  \qquad t \in [0,T] \vspace{2.5mm} \\
u(0) = u_0
\end{array} \end{equation}
in ${\mathcal X}$ and $ {\mathcal A}(t):{\mathcal Y} \to {\mathcal X}$.
Let the Cauchy problem be solvable; i.e., it admits the well-defined evolution operator $ {\mathcal U}(t,s) \in B({\mathcal X})$ satisfying the strong continuity and the semigroup property:
\[  {\mathcal U}(t,s) =  {\mathcal U}(t,r)  {\mathcal U}(r,s) \]
for $0 \le s \le r \le t < T$.
The solution is represented by $u(t) = {\mathcal U}(t,s) u_s$ with $u_s \in {\mathcal X}$ for a certain $0 \le s \le T$.
For the second order equation, ${\mathcal U}(t,s)$ is not equal to the abstraction of $\exp (\int {\mathcal A}(t) dt)$, so that ${\mathcal A}(t)$ is not the infinitesimal generator of  ${\mathcal U}(t,s)$, and ${\mathcal A} (t)$ is the infinitesimal generator of $\exp (\int {\mathcal A}(t) dt)$ instead.
In the following, the combined Miura transform is shown to be equivalent to the logarithmic representation for the infinitesimal generator of the second order abstract evolution equations.

The master equation of \eqref{eq02} is also written as a system of equations:
\begin{equation} \left\{ \begin{array}{ll} \label{eq03}
d u(t)/dt  -  v(t) = 0,  \vspace{2.5mm} \\
d v(t)/dt  - {\mathcal A} (t) u(t) = 0,  
\end{array} \right. \end{equation}
where, by focusing on  the representation of $v(t)$, $v(t)$ is formally represented by $v(t) = \partial_t  {\mathcal U}(t,s) v_s$ for a certain $v_s \in D(\partial_t  {\mathcal U}(t,s))$ that is compatible with the original $u(t) = {\mathcal U}(t,s) u_s$ for a certain $u_s \in D({\mathcal U}(t,s)) = {\mathcal X}$.

\begin{lemma}[Logarithmic representation of the derivative] \label{lem2}
Let $\kappa$ be a certain complex number.
For the evolution operator of Eq,~\eqref{eq02}, let $ {\mathcal U} (t,s)$ be included in the $C^1$ class in terms of variables t and s,
and the first order derivative $ {\mathcal V}(t,s) := \partial_t  {\mathcal U}(t,s)$ be further assumed to be bounded on ${\mathcal X}$ and strongly continuous for $0 \le t, s \le T$.
Then, for $ {\mathcal V}(t,s)$,
\begin{equation} \begin{array}{ll} \label{2ndrep2}
\partial_t {\rm Log}  {\mathcal V}(t,s)
: =      ( I - \kappa e^{-{\hat \alpha}(t,s)} )^{-1} ~ \partial_{t}  {\hat \alpha}(t,s)  
\end{array} \end{equation}
is well defined if ${\mathcal V}(t,s)$ and  $\partial_t   {\mathcal V}(t,s) $ commute, where ${\hat \alpha}(t,s): {\mathcal X} \to  {\mathcal X}$ is an operator defined by ${\hat \alpha}(t,s) = {\rm Log} ( {\mathcal V}(t,s) + \kappa I)$.
\end{lemma}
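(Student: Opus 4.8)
The plan is to mirror the proof of Lemma~\ref{lem1}, now treating $\mathcal{V}(t,s)$ in the role played there by the evolution operator $U(t,s)$ and $\hat{\alpha}(t,s)$ in the role of $a(t,s)$. The only structural input used in Lemma~\ref{lem1} was the existence of the principal logarithm of a $\kappa$-shifted bounded operator, the commutation hypothesis, and the differentiation rule for that logarithm; none of these requires $\mathcal{V}(t,s)$ itself to be an evolution operator, so the algebra should carry over once the analytic prerequisites are secured. Accordingly, the object $\partial_t {\rm Log}\,\mathcal{V}(t,s)$ is to be read as a \emph{definition} given by the right-hand side of \eqref{2ndrep2}, and the content of the lemma is that this right-hand side is a well-defined bounded operator.

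First I would fix $\kappa$ so that the spectrum of $\mathcal{V}(t,s)+\kappa I$ avoids the branch cut $(-\infty,0]$ of the principal logarithm for every $t,s\in[0,T]$. Since $\mathcal{V}(t,s)$ is assumed bounded and strongly continuous, its spectrum stays in a bounded region, so a single admissible $\kappa$ can be chosen, and the Riesz--Dunford integral then defines $\hat{\alpha}(t,s)={\rm Log}(\mathcal{V}(t,s)+\kappa I)\in B(\mathcal{X})$. Next I would differentiate: using the $C^1$ regularity of $\mathcal{U}$, hence the existence and strong continuity of $\partial_t\mathcal{V}(t,s)$, together with the commutation of $\mathcal{V}(t,s)$ and $\partial_t\mathcal{V}(t,s)$, differentiation under the Riesz--Dunford integral should yield $\partial_t\hat{\alpha}(t,s)=(\mathcal{V}(t,s)+\kappa I)^{-1}\partial_t\mathcal{V}(t,s)$. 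The commutation hypothesis is precisely what collapses the Fr\'echet derivative of ${\rm Log}$ to this product form, and this is the step I would justify most carefully.

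The remaining algebra is then the verbatim analogue of Lemma~\ref{lem1}. From $(\mathcal{V}+\kappa I)\partial_t\hat{\alpha}=\partial_t\mathcal{V}$ and the identity $\mathcal{V}=e^{\hat{\alpha}}-\kappa I$, one gets $\mathcal{V}^{-1}\partial_t\mathcal{V}=(I+\kappa\mathcal{V}^{-1})\partial_t\hat{\alpha}=e^{\hat{\alpha}}(e^{\hat{\alpha}}-\kappa I)^{-1}\partial_t\hat{\alpha}=(I-\kappa e^{-\hat{\alpha}})^{-1}\partial_t\hat{\alpha}$, which is exactly the right-hand side of \eqref{2ndrep2}. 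Since $I-\kappa e^{-\hat{\alpha}}=I-\kappa(\mathcal{V}+\kappa I)^{-1}=\mathcal{V}(\mathcal{V}+\kappa I)^{-1}$, the boundedness of the prefactor $(I-\kappa e^{-\hat{\alpha}})^{-1}=I+\kappa\mathcal{V}^{-1}$ is equivalent to invertibility of $\mathcal{V}(t,s)$, i.e.\ to $0\notin\sigma(\mathcal{V}(t,s))$.

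The main obstacle is therefore not the formal manipulation but this invertibility: unlike the first-order evolution operator $U(t,s)$, the derivative $\mathcal{V}(t,s)=\partial_t\mathcal{U}(t,s)$ carries no a~priori guarantee of having $0$ in its resolvent set, and the single parameter $\kappa$ must simultaneously place $\sigma(\mathcal{V}+\kappa I)$ off the branch cut and leave $0$ outside $\sigma(\mathcal{V})$. I would address this by arguing that the standing solvability hypotheses on \eqref{eq02}, under which the combined Miura transform $u=(\partial_x^2\psi)\psi^{-1}$ is meaningful, already force $\mathcal{V}(t,s)$ to be boundedly invertible on $\mathcal{X}$; granting that, the claim reduces to the spectral separation and differentiation-under-the-integral arguments already invoked for Lemma~\ref{lem1}, and the well-definedness of \eqref{2ndrep2} follows.
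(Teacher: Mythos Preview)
Your proposal is correct and takes essentially the same approach as the paper: the paper's proof is the single sentence ``The statement follows from Lemma~\ref{lem1},'' and your argument is precisely the unpacking of that sentence with $\mathcal{V}(t,s)$ in the role of $U(t,s)$ and $\hat\alpha(t,s)$ in the role of $a(t,s)$. You actually go further than the paper by isolating the invertibility of $\mathcal{V}(t,s)$ as the one nontrivial prerequisite, a point the paper leaves entirely implicit.
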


\begin{proof}
The statement follows from Lemma~\ref{lem1}.
\end{proof}

On the other hand, another logarithmic representation
\begin{equation} \begin{array}{ll} \label{2ndrep3}
\partial_t {\rm Log}  {\mathcal U}(t,s)
: =      ( I - \kappa e^{-\alpha(t,s)} )^{-1} ~ \partial_{t}  \alpha(t,s)  
\end{array} \end{equation}
is trivially well-defined by the assumption.
According to the representations (\ref{2ndrep2}) and (\ref{2ndrep3}), the abstract version of the Miura transform is obtained as the product of two logarithmic representations.

\begin{theorem}[Abstract formulation of the Miura transform]
Let $t$ and $s$ satisfy $0 \le t,s \le T$, $\kappa$ be a certain complex number, and $ {\mathcal Y}$ and  $ {\mathcal Y}'$ be a dense subspace of $ {\mathcal X}$.
The operator $\partial_t \alpha(t,s)$ is assumed to be a closed operator from ${\mathcal Y}$ to ${\mathcal X}$, and $\partial_t {\hat \alpha}(t,s)$ is assumed to be a closed operator from ${\mathcal Y}'$ to ${\mathcal X}$.
If  ${\mathcal U}(t,s)$, $\partial_t {\mathcal U}(t,s)$, and  $\partial_t^2 {\mathcal U}(t,s)$ commute with each other within a properly given domain space, the operators $\{ {\mathcal A}(t) \}_{0 \le t \le T}$ are represented by means of the logarithm function; there exists a certain complex number $\kappa \ne 0$ such that
\begin{equation} \label{logex2-1} \begin{array}{ll}
{\mathcal A}(t) ~ u 
 =   ( I - \kappa e^{-{\hat \alpha}(t,s)} )^{-1} \partial_{t}  {\hat \alpha}(t,s) ~
    ( I - \kappa e^{-\alpha(t,s)} )^{-1} \partial_{t}  \alpha(t,s)   ~     u,   
\end{array} \end{equation}
for an element $u$ of a dense subspace $ \{ u \in {\mathcal Y}; ~   ( I - \kappa e^{-\alpha(t,s)} )^{-1}  \partial_{t}  \alpha(t,s)  u \subset D( \partial_{t}  {\hat \alpha}(t,s)) \}$ of ${\mathcal X}$, and
\begin{equation} \label{logex2-2} \begin{array}{ll}
{\mathcal A}(t) ~ {\hat u} 
 =    ( I - \kappa e^{-\alpha(t,s)} )^{-1} \partial_{t}  \alpha(t,s)  ( I - \kappa e^{-{\hat \alpha}(t,s)} )^{-1} \partial_{t}  {\hat \alpha}(t,s) ~
    ~     {\hat u}, 
\end{array} \end{equation}
for an element ${\hat u}$ of a dense subspace $\{ {\hat u} \in {\mathcal Y}'; ~  ( I - \kappa e^{-{\hat \alpha}(t,s)} )^{-1} \partial_{t}  {\hat \alpha}(t,s) {\hat u} \subset D(\partial_{t}  \alpha(t,s)) \}$ of ${\mathcal X}$. 
\end{theorem}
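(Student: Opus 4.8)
The plan is to show that each factor appearing on the right of \eqref{logex2-1} and \eqref{logex2-2} is a logarithmic derivative in the sense of Lemma~\ref{lem1}, and that the product of the two factors collapses, under the stated commutativity, onto the operator $(\partial_t^2 \mathcal{U}(t,s))\,\mathcal{U}(t,s)^{-1}$, which is exactly $\mathcal{A}(t)$. First I would record the operator form of the second order equation \eqref{eq02}: inserting the solution $u(t) = \mathcal{U}(t,s)u_s$ into \eqref{eq02} gives $\partial_t^2 \mathcal{U}(t,s) = \mathcal{A}(t)\,\mathcal{U}(t,s)$ on a suitable domain, hence $\mathcal{A}(t) = (\partial_t^2 \mathcal{U}(t,s))\,\mathcal{U}(t,s)^{-1}$. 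This is precisely the abstract counterpart of the scalar identity $u = (\partial_x^2\psi)\psi^{-1}$ in \eqref{miura-mod}. Because $\mathcal{U}$ and $\partial_t^2\mathcal{U}$ are assumed to commute, $\mathcal{A}(t)$ also commutes with $\mathcal{U}(t,s)$, so the same operator may be written $\mathcal{A}(t) = \mathcal{U}(t,s)^{-1}\,\partial_t^2 \mathcal{U}(t,s)$; the two orderings are what will match \eqref{logex2-1} and \eqref{logex2-2}, respectively.

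Next I would rewrite the two logarithmic representations as genuine logarithmic derivatives. As in the proof of Lemma~\ref{lem1}, the right hand side of \eqref{2ndrep3} equals $\mathcal{U}(t,s)^{-1}\,\partial_t \mathcal{U}(t,s)$. Applying Lemma~\ref{lem2} to $\mathcal{V}(t,s) = \partial_t \mathcal{U}(t,s)$ in place of the evolution operator, the right hand side of \eqref{2ndrep2} equals $\mathcal{V}(t,s)^{-1}\,\partial_t\mathcal{V}(t,s) = (\partial_t\mathcal{U}(t,s))^{-1}\,\partial_t^2\mathcal{U}(t,s)$. A preliminary point here is the choice of $\kappa$: since $\mathcal{U}$ and $\mathcal{V}$ are bounded their spectra are bounded, so a single $\kappa\neq 0$ can be selected for which both $\mathcal{U}+\kappa I$ and $\mathcal{V}+\kappa I$ avoid the cut of the principal branch, making $\alpha={\rm Log}(\mathcal{U}+\kappa I)$ and $\hat\alpha={\rm Log}(\mathcal{V}+\kappa I)$ simultaneously well defined via the Riesz--Dunford integral. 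Substituting these two identities turns the assertion into a purely algebraic statement about $\mathcal{U}$, $\partial_t\mathcal{U}$ and $\partial_t^2\mathcal{U}$.

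The core step is the cancellation. For \eqref{logex2-1} the product of factors reads
\[
(\partial_t\mathcal{U})^{-1}(\partial_t^2\mathcal{U})\,\mathcal{U}^{-1}(\partial_t\mathcal{U}),
\]
and the hypothesis that $\mathcal{U}$, $\partial_t\mathcal{U}$ and $\partial_t^2\mathcal{U}$ mutually commute lets me move the trailing $\partial_t\mathcal{U}$ past $(\partial_t^2\mathcal{U})\mathcal{U}^{-1}$ and annihilate it against the leading $(\partial_t\mathcal{U})^{-1}$, leaving $(\partial_t^2\mathcal{U})\mathcal{U}^{-1}=\mathcal{A}(t)$. For \eqref{logex2-2} the two factors appear in the opposite order, the inner pair $(\partial_t\mathcal{U})(\partial_t\mathcal{U})^{-1}$ cancels immediately, and the same commutativity yields $\mathcal{U}^{-1}(\partial_t^2\mathcal{U})=\mathcal{A}(t)$. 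Both representations therefore reduce to $\mathcal{A}(t)$.

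The delicate point---and the step I expect to be the main obstacle---is not the algebra but its legitimacy on unbounded domains. Since $\partial_t\alpha(t,s)$ and $\partial_t\hat\alpha(t,s)$ are only closed and densely defined, the vector produced by the inner factor must lie in the domain of the outer factor; this is exactly what the two dense subspaces in the statement enforce, and one must check that they are indeed dense and that the composite operator is again closed, so that it genuinely represents $\mathcal{A}(t)$ rather than merely agreeing with it on a small set. In particular the cancellations $(\partial_t\mathcal{U})^{-1}(\partial_t\mathcal{U})=I$ and $\mathcal{U}^{-1}\mathcal{U}=I$, and the commutation moves used above, have to be read as identities between closed operators restricted to these subspaces. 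Tracking these domain inclusions, and verifying that the commutativity of $\mathcal{U}$, $\partial_t\mathcal{U}$ and $\partial_t^2\mathcal{U}$ is stable under the required restrictions, is where the real work lies; the formal computation itself is immediate once Lemma~\ref{lem1} and Lemma~\ref{lem2} are in hand.
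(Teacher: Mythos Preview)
Your proposal is correct and follows essentially the same route as the paper: both start from the operator identity $\partial_t^2\mathcal{U}(t,s)=\mathcal{A}(t)\,\mathcal{U}(t,s)$, invert $\mathcal{U}$, factor through $\partial_t\mathcal{U}$ to obtain the product of two logarithmic derivatives, and then invoke Lemma~\ref{lem1} and Lemma~\ref{lem2} to identify those derivatives with the $\alpha$- and $\hat\alpha$-expressions, using the commutation hypothesis to obtain both orderings. If anything, your treatment is more careful than the paper's own proof, which carries out only the formal factorization and does not discuss the choice of a common $\kappa$ or the domain tracking you flag in your final paragraph.
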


\begin{proof}
The autonomous equation \eqref{eq02}, which corresponds to the abstract form of the Miura transform \eqref{miura-mod}, is written by
\[ \begin{array}{ll} 
 \partial_{t}^2  {\mathcal U}(t,s) u = {\mathcal A}(t)   {\mathcal U}(t,s) u
\end{array} \]
for any $u \in {\mathcal X}$, so that it follows that
$\partial_{t}^2  {\mathcal U}(t,s)  = {\mathcal A}   {\mathcal U}(t,s) $
is valid as an operator equation.
Under the assumption of commutative property between $ {\mathcal A} $ and ${\mathcal U}(t,s)$, the operator $ {\mathcal A} $ is represented by
\[ \begin{array}{ll} 
{\mathcal A}(t) =    {\mathcal U}(t,s)^{-1}  \partial_{t}^2  {\mathcal U}(t,s)  \vspace{2.5mm} \\
\quad =    {\mathcal U}(t,s)^{-1}  \partial_{t}  {\mathcal U}(t,s)  
~ ( \partial_{t}  {\mathcal U}(t,s) )^{-1}   \partial_{t}^2  {\mathcal U}(t,s), 
\end{array} \]
where the former part $ {\mathcal U}(t,s)^{-1}  \partial_{t}  {\mathcal U}(t,s)  $ and the latter part $ ( \partial_{t}  {\mathcal U}(t,s) )^{-1}   \partial_{t}^2  {\mathcal U}(t,s)$ of the right hand side correspond to the logarithmic representation $  ( I - \kappa e^{-\alpha(t,s)} )^{-1} \partial_{t}  \alpha(t,s) $ and $  ( I - \kappa e^{-{\hat \alpha}(t,s)} )^{-1} \partial_{t}  {\hat \alpha}(t,s) $ respectively.
In this equation ${\mathcal U}(t,s)$, $\partial_t {\mathcal U}(t,s)$, and  $\partial_t^2 {\mathcal U}(t,s)$ are assumed to commute with each other, so that the logarithmic representation of $A(t)$ follows.
\end{proof}

In particular, for the commutation between two generally-unbounded operators, the intermediate domain space can be different depending on the order of  operators.
Here is a reason why the two.representations \eqref{logex2-1} and \eqref{logex2-2} are obtained depending on the order of operators.

\subsection{Second order abstract evolution equations}

\begin{corollary}[Logarithmic representation of infinitesimal generator]
Let operators $\partial_t {\mathcal U}(t,s)$ and  $\partial_t {\mathcal V}(t,s) = \partial_t^2 {\mathcal U}(t,s)$ satisfy the sectorial property.
If either Eq.~\eqref{logex2-1} or Eq.~\eqref{logex2-2} is well-defined, then their square roots being represented by either
\begin{equation} \label{logex3-1}
\pm {\mathcal A}(t)^{1/2} = 
\pm \left\{    ( I - \kappa e^{-{\hat \alpha}(t,s)} )^{-1} ~ \partial_{t}  {\hat \alpha}(t,s)
  ( I - \kappa e^{-\alpha(t,s)} )^{-1} \partial_{t}  \alpha(t,s) \right\}^{1/2}   
\end{equation}
or 
\begin{equation} \label{logex3-2}
\pm {\mathcal A}(t)^{1/2} = 
\pm \left\{ ( I - \kappa e^{-\alpha(t,s)} )^{-1} \partial_{t}  \alpha(t,s)   ~
                      ( I - \kappa e^{-{\hat \alpha}(t,s)} )^{-1} ~ \partial_{t}  {\hat \alpha}(t,s) \right\}^{1/2}  
\end{equation}
are the infinitesimal generators of Eq.~\eqref{eq02} in the sense that the solution of $d^2 u(t)/dt^2  = {\mathcal A} (t) u(t)$ is represented by 
\begin{equation} \label{linearcom} u(t) = {\mathcal U}(t,s) u_0
= \exp \left[ + {\mathcal A}(t)^{1/2} \right] u_{+}  +   \exp  \left[ - {\mathcal A}(t)^{1/2} \right]  u_{-} , 
\end{equation}
where $u_+$ and $u_-$ are the elements of ${\mathcal X}$.
The representation \eqref{logex3-1} is valid if \eqref{logex2-1} is true, and the representation \eqref{logex3-2} is valid if \eqref{logex2-2} is true.
\end{corollary}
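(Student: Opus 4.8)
The plan is to reduce the second-order equation \eqref{eq02} to a pair of first-order problems by an operator factorization, and then to read off the two solution branches as the semigroups generated by $\pm\mathcal{A}(t)^{1/2}$. First I would invoke the sectorial hypothesis on $\partial_t\mathcal{U}(t,s)$ and $\partial_t^2\mathcal{U}(t,s)$ to guarantee that the product operator inside the braces of \eqref{logex3-1}—which, by the Theorem, equals $\mathcal{A}(t)$ whenever \eqref{logex2-1} is well-defined—is itself sectorial and hence admits a well-defined square root through the Riesz--Dunford/Dunford--Taylor functional calculus, i.e. $\mathcal{A}(t)^{1/2}=\frac{1}{2\pi i}\int_{\Gamma}\lambda^{1/2}(\lambda-\mathcal{A}(t))^{-1}\,d\lambda$ along a suitable sectorial contour $\Gamma$. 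This is exactly the point at which the two orderings of the logarithmic factors are distinguished: \eqref{logex3-1} is the square root of the product written in the order of \eqref{logex2-1}, and \eqref{logex3-2} that of \eqref{logex2-2}, so the last sentence of the statement follows at once once the root is constructed from whichever product is defined.

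Next I would establish the factorization on the dense domains specified in the Theorem. Writing the autonomous operator identity as $\partial_t^2\mathcal{U}(t,s)=\mathcal{A}(t)\,\mathcal{U}(t,s)$ and using $\bigl(\mathcal{A}(t)^{1/2}\bigr)^2=\mathcal{A}(t)$ together with the standing commutation of $\mathcal{U}(t,s)$, $\partial_t\mathcal{U}(t,s)$ and $\partial_t^2\mathcal{U}(t,s)$, I would verify
\[
\frac{d^2}{dt^2} - \mathcal{A}(t) = \left( \frac{d}{dt} - \mathcal{A}(t)^{1/2} \right)\left( \frac{d}{dt} + \mathcal{A}(t)^{1/2} \right),
\]
the cross terms cancelling precisely because $\mathcal{A}(t)^{1/2}$ commutes with $d/dt$ in the autonomous setting. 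Each factor then determines a first-order problem $du/dt=\pm\mathcal{A}(t)^{1/2}u$ whose solution is the semigroup $\exp[\pm\mathcal{A}(t)^{1/2}]$, and by linearity the general solution of \eqref{eq02} is the superposition $u(t)=\exp[+\mathcal{A}(t)^{1/2}]u_{+}+\exp[-\mathcal{A}(t)^{1/2}]u_{-}$ with $u_{\pm}\in\mathcal{X}$ fixed by the Cauchy data, which reproduces $\mathcal{U}(t,s)u_0$ in \eqref{linearcom}.

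The step I expect to be the main obstacle is showing that \emph{both} signs $\pm\mathcal{A}(t)^{1/2}$ genuinely generate semigroups on $\mathcal{X}$. For sectorial $\mathcal{A}(t)$ the branch $-\mathcal{A}(t)^{1/2}$ generates an analytic semigroup in the standard way, but the branch $+\mathcal{A}(t)^{1/2}$ corresponds to backward evolution and need not be a generator unless $\mathcal{A}(t)^{1/2}$ in fact generates a $C_0$-group—that is, unless \eqref{eq02} is of genuinely hyperbolic (wave-type) character rather than merely parabolic. I would therefore have to strengthen the sectorial hypothesis to the group-generation property, or else exploit the fact that the Cauchy problem for \eqref{eq02} is already assumed solvable in both time directions through the semigroup property of $\mathcal{U}(t,s)$, and then check that the resolvent estimates for $\pm\mathcal{A}(t)^{1/2}$ inherited from the functional calculus are strong enough to apply the Hille--Yosida generation criterion to each branch separately. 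Once that is secured, the superposition above is the unique solution and the Corollary follows.
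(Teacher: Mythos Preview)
Your factorization-and-superposition strategy is exactly the paper's: the paper writes
\[
\bigl(\partial_t\mathcal{U}(t,s)^{1/2}+\mathcal{A}^{1/2}\mathcal{U}(t,s)^{1/2}\bigr)\bigl(\partial_t\mathcal{U}(t,s)^{1/2}-\mathcal{A}^{1/2}\mathcal{U}(t,s)^{1/2}\bigr)u=0,
\]
splits into two first-order problems, and reads off \eqref{linearcom}. Your observation about the $+\mathcal{A}^{1/2}$ branch needing a group rather than a mere semigroup is a legitimate concern that the paper simply does not address, so on that point you are being more careful than the original.

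Where you diverge, and where there is a genuine gap, is in the construction of $\mathcal{A}(t)^{1/2}$. You argue that sectoriality of $\partial_t\mathcal{U}(t,s)$ and $\partial_t^2\mathcal{U}(t,s)$ forces the product in \eqref{logex2-1} (i.e.\ $\mathcal{A}(t)$ itself) to be sectorial, so that a single Dunford--Taylor integral yields $\mathcal{A}(t)^{1/2}$. That implication is not available: a product of sectorial operators is not sectorial in general, even under commutation, and nothing in the hypotheses lets you locate the spectrum of $\mathcal{A}(t)$ in a sector. The paper sidesteps this by never taking a square root of $\mathcal{A}(t)$ directly. Instead it sets
\[
{\rm Log}\bigl[\mathcal{A}(t)^{1/2}\bigr]:=\tfrac12\Bigl\{{\rm Log}\bigl[\partial_t{\rm Log}\,\mathcal{U}(t,s)\bigr]+{\rm Log}\bigl[\partial_t{\rm Log}\,\mathcal{V}(t,s)\bigr]\Bigr\},
\]
uses the sectorial hypothesis on each of $\partial_t\mathcal{U}$ and $\partial_t\mathcal{V}=\partial_t^2\mathcal{U}$ to make the two individual square roots $[\partial_t{\rm Log}\,\mathcal{U}]^{1/2}$ and $[\partial_t{\rm Log}\,\mathcal{V}]^{1/2}$ well defined, and only then multiplies them, the commutation assumption guaranteeing that either order gives the same object. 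So the sectorial hypothesis is consumed factor-by-factor, not on the product; if you want to follow your route you would need an additional argument (or hypothesis) placing $\sigma(\mathcal{A}(t))$ in a sector.
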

\begin{proof}
Under the commutation relation, the autonomous equation \eqref{eq02} is also formally factorized as
\[ \begin{array}{ll} 
 \left( \partial_{t}  {\mathcal U}(t,s) ^{1/2}  +  {\mathcal A}^{1/2}  {\mathcal U}(t,s) ^{1/2} \right)
 \left( \partial_{t} {\mathcal U}(t,s) ^{1/2} -  {\mathcal A} ^{1/2}  {\mathcal U}(t,s) ^{1/2}  \right) u = 0
\end{array} \]
for any $u \in {\mathcal X}$.
It leads to the decomposition such that
\[ \left\{ \begin{array}{ll} 
 \partial_{t}  {\mathcal U}(t,s) ^{1/2} u_+  +  {\mathcal A}^{1/2}  {\mathcal U}(t,s) ^{1/2} u_+ = 0 ,  \vspace{2.5mm} \\
 \partial_{t}  {\mathcal U}(t,s) ^{1/2} u_-  -  {\mathcal A}^{1/2}  {\mathcal U}(t,s) ^{1/2} u_- = 0.
\end{array} \right. \]
The representation shown in Eq.~\eqref{linearcom} is understood.

It is necessary to confirm the possibility of defining the fractional power of ${\mathcal A}$.
The possibility of defining square root of operator is justified if it is possible to define the exponential of
\[ \begin{array}{ll} 
{\rm Log} \left[ {\mathcal A}(t)^{1/2} \right] := 
\frac{1}{2} \left\{   {\rm Log} \left[  \partial_t  {\rm Log}  {\mathcal U}(t,s)  \right] 
+ {\rm Log} \left[  \partial_t  {\rm Log}  {\mathcal V}(t,s) \right]
\right\}, 
\end{array}  \]
where note that the logarithms of the right hand side are the formal form, and it does not matter whether they are well defined or not.
Since $\alpha(t,s) \in B({\mathcal X})$ and ${\hat \alpha}(t,s) \in B({\mathcal X})$ are the logarithms of operator, here the logarithm of logarithm is necessary to be well-defined.
According to the commutation assumption between $\partial_t {\mathcal U}(t,s)$ and  $ \partial_t {\mathcal V}(t,s) =  \partial_t^2 {\mathcal U}(t,s)$, the exponential of each  logarithm of the right hand side are independently well-defined.
According to the commutation relation between  ${\mathcal U}(t,s)$, $\partial_t {\mathcal U}(t,s)$ and  $ \partial_t {\mathcal V}(t,s) =  \partial_t^2 {\mathcal U}(t,s)$, the exponential function of right hand side is equal to
\[ \begin{array}{ll} 
 \left[  \partial_t  {\rm Log}  {\mathcal U}(t,s)  \right] ^{1/2} 
 ~  \left[  \partial_t  {\rm Log}  {\mathcal V}(t,s)  \right] ^{1/2}
 =    \left[  \partial_t  {\rm Log}  {\mathcal V}(t,s)  \right] ^{1/2}
  ~  \left[  \partial_t  {\rm Log}  {\mathcal U}(t,s)  \right] ^{1/2},
\end{array}  \]
and each square root is well-defined by the sectorial assumption (for the sectorial property, see Sec.2.10 of Chap.5 in \cite{66kato}), where the logarithms of ${\mathcal U}(t,s)$ and $ {\mathcal V}(t,s)$ leading to the definition of fractional powers of ${\mathcal U}(t,s)$ and $ {\mathcal V}(t,s)$ are valid as seen in Eqs.~\eqref{2ndrep2} and \eqref{2ndrep3}.
Consequently, the logarithmic representations of infinitesimal generators $\pm {\mathcal A}(t)^{1/2} $ are true.
\end{proof}

\section{Summary}
In this article the Miura transform is generalized in the following sense:
\begin{itemize}
\item it is not only the transform between the KdV and mKdV equations; \vspace{1.5mm}
\item the spatial dimension of the equation is not necessarily equal to 1; \vspace{1.5mm}
\item the differential in Eq.~\eqref{logex2-1} is not necessarily for the spatial variable $x$;
\end{itemize}
where, in terms of applying to theory of higher order abstract evolution equations, the variable is taken as $t$ in this article. 
For the preceding work dealing with the general choice of the evolution direction, see \cite{19iwata-proc, 19iwata}. 
Consequently the generalized Miura transform is obtained as the product of two logarithmic representations of operators in a general Banach space framework and they are applied to clarify the structure of the general solutions of second order abstract evolution equations defined in finite and infinite dimensional Banach spaces.
Since the linear operator $\mathcal{A}$ is a generalized concept of matrices, the presented result potentially includes any matrix situations.


\begin{thebibliography}{999}
\bibitem{20benia}
 Y. Benia, A. Scapellato
 Existence of solution to Korteweg-de Vries equation in a non-parabolic domain
 Nonlinear Analysis {\bf 195} (2020) 111758.\\

\bibitem{10brezis}
H. Brezis,
Functional Analysis, Sobolev Spaces and Partial Differential Equations,
Springer-Verlag, 2010. \\

\bibitem{51cole}
D. Cole,
On a quasi-linear parabolic equation occurring in aerodynamics, 
Quart. Appl. Math.{\bf 9} (1951), no. 3, 225.\\

\bibitem{50hopf}
 E. Hopf,
 The partial differential equation $u_{t} + uu_{x} =\mu u_{xx}$, 
 Comm. Pure Appl. Math.{\bf 3} (1950),201.\\

\bibitem{71hirota}
R. Hirota,
Exact solution of the Korteweg-de Vries equation for multiple collisions of solitons, 
Phys. Rev. Lett. {\bf 27} (1971) 1192-1194. \\

\bibitem{17iwata-1}
Y. Iwata,
Infinitesimal generators of invertible evolution families,
Methods Func. Anal. Topology {\bf 23} 1 (2017) 26-36. \\

\bibitem{17iwata-3}
Y. Iwata,
Alternative infinitesimal generator of invertible evolution families, 
J. Appl. Math. Phys. {\bf 5} (2017) 822-830. \\

\bibitem{18iwata-1}
Y. Iwata,
Abstract formulation of the Cole-Hopf transform,
Methods Funct. Anal. Topology {\bf 25} 2 (2019) 142-151. \\

\bibitem{19iwata-proc}
Y. Iwata, 
Relativistic formulation of abstract evolution equations, 
AIP Conference Proceedings {\bf 2075} (2019) 100007.\\

\bibitem{19iwata}
Y. Iwata,
Theory of $B(X)$-module,
arXiv:1907.08767. \\

\bibitem{91jackson}
E. A Jackson,
Perspectives of Nonlinear Dynamics Vols. I and II,
Cambridge University Pres, 1991. \\

\bibitem{66kato}
T. Kato,
Perturbation Theory for Linear Operators,
Springer-Verlag, 1966. \\

\bibitem{70kato}
T. Kato,
Linear evolution equation of ''hyperbolic" type,
J. Fac. Sci. Univ. Tokyo {\bf 17} (1970) 241-258.  \\

\bibitem{73kato}
T. Kato,
Linear evolution equation of ''hyperbolic" type II,
J. Math. Soc. Japan {\bf 25} 4 (1973) 648-666.  \\

\bibitem{68miura}
R. Miura, 
Korteweg‐de Vries Equation and Generalizations. I. A Remarkable Explicit Nonlinear Transformation, 
J. Math. Phys. {\bf 9} (1968) 1202.\\

\bibitem{13ruggieri}
M. Ruggieri, M. P. Speciale, Quasi self-adjoint coupled KdV-like equations,
AIP Conf. Proc.{\bf 1558} (2013) 1220-1223. \\

\bibitem{73scott}
A. W. Scott, F. Y. Chu, and D. W. Machlaughlin, 
The soliton: A new concept in applied science,
Proceedings of the IEEE {\bf 61} (1973) 10, 1443. \\

\bibitem{79tanabe}
H. Tanabe,
Equations of evolution,
Pitman, 1979. \\

\bibitem{65yosida}
K. Yosida,
Functional Analysis, 
Springer-Verlag, 1965. \\


\end{thebibliography}
\end{document}